\documentclass[a4paper,12pt,reqno]{amsart}

 \usepackage[T2A]{fontenc}
 \usepackage[utf8]{inputenc}
 \usepackage[ukrainian,english]{babel}
 \usepackage{amssymb,upref}
\usepackage{amsthm}

\usepackage{graphicx}
\usepackage{mathrsfs}

\usepackage[text={130mm,200mm}]{geometry}

\theoremstyle{plain}
\newtheorem{theorem}{Theorem}
\newtheorem*{theorem*}{Theorem}
\newtheorem{corollary}{Corollary}
\newtheorem*{corollary*}{Corollary}
\newtheorem{lemma}{Lemma}
\newtheorem*{lemma*}{Lemma}

\newtheorem*{proposition*}{Proposition}

\newtheorem*{conjecture*}{Conjecture}
\theoremstyle{definition}
\newtheorem{definition}{Definition}
\newtheorem*{definition*}{Definition}
\theoremstyle{remark}

\newtheorem*{remark*}{Remark}

\begin{document}

\title[Linear fractals of the Besicovitch--Eggleston type]{Linear fractals of the Besicovitch--Eggleston type}

\author{M. V. Pratsiovytyi}
\address[M. V. Pratsiovytyi]{Institute of Mathematics of NAS of Ukraine,  Dragomanov Ukrainian State University, Kyiv, Ukraine\\
ORCID 0000-0001-6130-9413}
\email{prats4444@gmail.com}
\author{S. O. Klymchuk}
\address[S. O. Klymchuk]{Institute of Mathematics of NAS of Ukraine, Kyiv, Ukraine\\
ORCID 0009-0005-3979-4543}
\email{svetaklymchuk@imath.kiev.ua}

\subjclass{11K55, 26A27, 26A30}

\keywords{Asymptotic mean of digits, digit
frequency of number, $s$--adic representation of real numbers, sets of
Besicovitch--Eggleston type, Hausdorff--Besicovitch fractal
dimension, normal numbers, weakly normal numbers.}

\thanks{Scientific Journal of Drahomanov National Pedagogical University. Series 1. Physical and Mathematical Sciences. -- Kyiv: Drahomanov National Pedagogical University, 2012, No. 13(2), pp. 80--92.}

\begin{abstract}
We study topological, metric and fractal properties of set of numbers $[0;1]$ with given asymptotic mean of digits in their ternary representation. We investigate connection of these numbers and numbers with a given frequency of digits.
\end{abstract}

\maketitle
\section{Introduction}
It is well known that for each real number $x \in [0,1]$ there exists a sequence $(\alpha_n)$ such that $\alpha_n \in \mathcal{A} = {0,1,\ldots,s-1}$ and
$$
 x=\displaystyle\frac{\alpha_1}{s}+
   \displaystyle\frac{\alpha_2}{s^2}+\cdots+
   \displaystyle\frac{\alpha_n}{s^n}+\cdots\equiv\Delta^s_{\alpha_1\alpha_2\ldots\alpha_n\ldots}.
$$
The latter notation is called the \emph{$s$--adic representation}. We denote by $\alpha_k=\alpha_k(x)$ the \emph{$k$--th $s$--adic digit of the number} $x$. In general we cannot define the $k$--th digit uniquely as a function of the number, since the following equality holds
$$
\Delta^s_{c_1\ldots c_{k-1}c_k(0)}=\Delta^s_{c_1\ldots c_{k-1}[c_k-1](s-1)},
$$
where $(i)$ denotes the period in the expansion of the number. These numbers are called \emph{$s$--adic rational} and each such number admits exactly two distinct $s$--adic representations. All other numbers admit a unique expansion, and we call them \emph{$s$--adic irrational}. We agree to use only the first $s$--adic representation, namely the one with period $(0)$. Under this agreement, the function $\alpha_n(x)$ is well defined on the interval $[0,1]$.

Let $N_i(x,k)$ denote the quantity of digits $i \in \mathcal{A} = {0,1,\ldots,s-1}$ in the $s$--adic representation $\Delta^s_{\alpha_1\alpha_2\ldots\alpha_k\ldots}$ of a number $x \in [0,1]$ up to and including the $k$--th position, that is
$$
 N_i(x,k)=\# \{j:\, \alpha_j (x)=i, \, j\leqslant k\}.
$$
The number $k^{-1}N_i(x,k)$ is called the relative frequency of the digit $i$ in the $s$--adic representation of the number $x$. 

\begin{definition}
We define the \emph{frequency (asymptotic frequency) of the digit $i$ in the $s$--adic representation of a number $x \in [0,1]$} as the limit
$$
\nu_i(x)=\lim\limits_{k\to\infty}\displaystyle\frac{N_i(x,k)}{k},
$$
provided that this limit exists.
\end{definition}

The frequency function $\nu_i(x)$ of the digit $i$ in the $s$--adic representation of a number $x \in [0,1]$ is well defined for $s$--adic irrational numbers, and for $s$--adic rational numbers, it becomes well defined once we adopt the convention above to use only the representation with period $(0)$.

Since the beginning of the 20th century researchers have actively studied the properties of sets of real numbers by analyzing the frequencies of digits in their representations in various numeral systems. É. Borel and H. Lebesgue produced pioneering results on normal and weakly normal numbers and A. S. Besicovitch, G. Eggleston, P. Billingsley, O. Y. Khinchin, M. M. Postnikov, I. J. Pyatetskyi--Shapiro, M. V. Pratsiovytyi, L. Olsen, G. M. Torbin and others investigated properties of sets defined through the frequencies of symbols or combinations of symbols in specific representations.

A well-known fact (Borel’s theorem \cite{Bor}) states that \emph{for almost all numbers in $[0,1]$ (in the sense of Lebesgue measure), the following equality holds}
$$
\nu_0(x)=\nu_1(x)=\ldots=\nu_{s-1}(x)=\displaystyle\frac{1}{s}.
$$
We call such numbers \emph{weakly normal or normal on the base $s$}. The set of non-normal numbers (those that are not normal) has Lebesgue measure zero and, therefore, may exhibit a fractal structure. A continuous class of fractal subsets of this set arises from the Besicovitch–Eggleston sets, defined as 
$$
E\equiv E[\tau_0,\tau_1,\ldots,\tau_{s-1}]=\{x:\nu_i(x)=\tau_i,\,\,i=0,1,\ldots,s-1\}.
$$ 
We compute the Hausdorff–Besicovitch fractal dimension of the set $E$ using the formula
$$
\alpha_0(E)=-\displaystyle\frac{\ln\tau_0^{\tau_0}\tau_1^{\tau_1}\ldots\tau_{s-1}^{\tau_{s-1}}}{\ln s}.
$$

As established in \cite{AlPrTor} and \cite{PrTorb}, the set of non-normal and essentially non--normal numbers (numbers for which at least one or all digit frequencies do not exist) forms a superfractal set, meaning that its Hausdorff–Besicovitch dimension equals 1.

\begin{definition}
We define the \emph{asymptotic mean of digits} (or simply the \emph{mean of digits}) of a number $x \in [0,1]$ as the number
$$
r(x)\equiv\lim\limits_{n\to\infty}\frac{1}{n}\sum^{n}_{i=1}\alpha_i(x),
$$
where $A \ni \alpha_i$ denotes the $i$--th digit of the $s$--adic representation of the number $x$ (provided that the above limit exists).
\end{definition}

The asymptotic mean of the digits serves as a certain analogue of the frequency of an $s$--adic digit. Moreover, for $s=2$ the equality $r(x) = \nu_1(x)$ holds. This equality also holds for any number whose $s$--adic representation contains only the digits ``0'' and ``1'' or even a finite quantity of other digits. Therefore, for such numbers the absence of a frequency for the digit ``1'' (and one can easily construct an example of such a number) is equivalent to the absence of the asymptotic mean of the digits.

We focus on sets of numbers with a prescribed asymptotic mean of digits, that is, sets of the form
$$
S_a=\left\{x:r(x)=\lim_{n\to\infty}\frac{1}{n}\sum^{n}_{i=1}\alpha_i(x)=a\geqslant 0\right\}.
$$
Here, the constant $a$ represents a given parameter from $[0,2]$, and we are interested in the topological, metric, and fractal properties of these sets. In particular, in this paper we study the topological, metric, and fractal properties of sets of numbers represented in the ternary ($3$--adic)numeral  system that have a prescribed asymptotic mean of digits and for which the frequencies of all ternary digits exist.

\section{Relationship between digit frequencies and the asymptotic mean of digits}

\begin{lemma}
If the base $s$ of the numeral system satisfies $s>2$ and $\nu_i(x)=0$ for all $i>1$ then $r(x)$ and $\nu_1(x)$ either both do not exist or both exist and in the latter case $\nu_1(x) = r(x)$.
\end{lemma}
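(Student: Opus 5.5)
The plan is to compare the partial digit sums with the counts $N_1(x,n)$ and show their difference is negligible after dividing by $n$. For every $n$ we have the exact identity
$$
\sum_{j=1}^{n}\alpha_j(x)=\sum_{i=0}^{s-1} i\,N_i(x,n)=N_1(x,n)+\sum_{i=2}^{s-1} i\,N_i(x,n).
$$
Hence
$$
\frac{1}{n}\sum_{j=1}^{n}\alpha_j(x)-\frac{N_1(x,n)}{n}=\sum_{i=2}^{s-1} i\,\frac{N_i(x,n)}{n}.
$$
The whole argument rests on showing that the right-hand side tends to zero.

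For this I use the hypothesis: for each $i\ge 2$ the frequency $\nu_i(x)$ exists and equals $0$, so $N_i(x,n)/n\to 0$. The sum on the right has finitely many terms ($s-2$ of them), each with a bounded coefficient $i\le s-1$. It is nonnegative and bounded above by $(s-1)\sum_{i=2}^{s-1}N_i(x,n)/n$, which therefore tends to $0$. Writing $\varepsilon_n$ for this sum, we get $\frac1n\sum_{j\le n}\alpha_j(x)=\frac{N_1(x,n)}{n}+\varepsilon_n$ with $\varepsilon_n\to0$.

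The conclusion then follows from elementary limit arithmetic. If $\nu_1(x)=\lim N_1(x,n)/n$ exists, then the left side converges to the same value, so $r(x)$ exists and $r(x)=\nu_1(x)$. Conversely, if $r(x)$ exists, then $N_1(x,n)/n=\frac1n\sum_{j\le n}\alpha_j(x)-\varepsilon_n$ converges to $r(x)$, so $\nu_1(x)$ exists and equals $r(x)$. Consequently the nonexistence of either limit forces the nonexistence of the other.

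I see no real obstacle. The only care needed is to note that the digit function $\alpha_j(x)$ is well defined under the paper's convention of using the representation with period $(0)$ for $s$-adic rationals, so the identity above is unambiguous. The condition $s>2$ only ensures that the set of digits $i>1$ is nonempty, which makes the hypothesis meaningful; for $s=2$ the identity $r(x)=\nu_1(x)$ holds trivially, as already noted in the paper.
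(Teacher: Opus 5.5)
Your proof is correct and follows the same route as the paper: write $\frac1n\sum_{j\le n}\alpha_j(x)=\sum_i i\,N_i(x,n)/n$, use $\nu_i(x)=0$ for $i\ge 2$, and pass to the limit. Your version is more careful than the paper's. The paper asserts $\frac1n\sum_{j\le n}\alpha_j(x)=\frac{N_1(x,n)}{n}$ as an exact equality for finite $n$, whereas you keep the vanishing error term $\varepsilon_n$ and treat both directions explicitly.
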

\begin{proof}
Since
$
  \dfrac{1}{n}\sum\limits^{n}_{i=1}\alpha_i(x)=\displaystyle\frac{0\cdot N_0(x,n)}{n}+\displaystyle\frac{1\cdot N_1(x,n)}{n}+\ldots+\displaystyle\frac{(s-1)\cdot N_{s-1}(x,n)}{n}
$
and $\nu_2=\nu_3=\ldots=\nu_{s-1}=0$, then
$$
  \displaystyle\frac{1}{n}\sum^{n}_{i=1}\alpha_i(x)=\displaystyle\frac{1\cdot N_1(x,n)}{n}.
$$
By taking the limit, we obtain
\begin{center}
$
\lim\limits_{n\to\infty}\displaystyle\frac{1}{n}\sum^{n}_{i=1}\alpha_i(x)=\lim\limits_{n\to\infty}\displaystyle\frac{N_1(x,n)}{n},\:\: \text{hence}\:\:\:r(x)=\nu_1(x),
$
\end{center} provided that the above limit exists.
\end{proof}

\begin{lemma}
If the ternary representation of a number $x$ has frequencies for all digits, then it has an asymptotic mean of digits $r(x)$, and in this case, $r(x) = \nu_1(x) + 2\nu_2(x)$.
\end{lemma}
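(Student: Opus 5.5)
The plan is to rewrite the partial digit averages through the digit counts, exactly as in the proof of Lemma~1, and then pass to the limit term by term. In the ternary system each digit $\alpha_i(x)$ lies in $\{0,1,2\}$. So for every $n$ the sum $\sum_{i=1}^n\alpha_i(x)$ counts each occurrence of the digit $1$ once and each occurrence of the digit $2$ twice. This gives the exact identity
$$
\frac{1}{n}\sum_{i=1}^{n}\alpha_i(x)=\frac{0\cdot N_0(x,n)}{n}+\frac{N_1(x,n)}{n}+\frac{2N_2(x,n)}{n}=\frac{N_1(x,n)}{n}+2\,\frac{N_2(x,n)}{n}.
$$
This identity holds for every $x\in[0,1]$ and every $n\in\mathbb{N}$, under the agreed convention that only the representation with period $(0)$ is used.

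Next I use the hypothesis. The frequencies $\nu_1(x)=\lim_{n\to\infty}N_1(x,n)/n$ and $\nu_2(x)=\lim_{n\to\infty}N_2(x,n)/n$ exist and are finite, since they lie in $[0,1]$. By linearity of limits, the right-hand side of the identity converges to $\nu_1(x)+2\nu_2(x)$. Hence the left-hand side converges too. By the definition of the asymptotic mean of digits, this means $r(x)$ exists and $r(x)=\nu_1(x)+2\nu_2(x)$.

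There is no real obstacle here. The only points to state explicitly are two. First, the identity above is exact for each $n$, and not merely asymptotic. Second, only the existence of $\nu_1$ and $\nu_2$ is actually needed. The existence of $\nu_0$ is automatic anyway, because $N_0+N_1+N_2=n$. I would also remark that the converse fails: $r(x)$ may exist while the frequencies do not. This is why the statement is an implication in one direction only, in contrast with Lemma~1.
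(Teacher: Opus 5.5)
Your proof is correct and follows essentially the same route as the paper: you write the partial mean as $N_1(x,n)/n+2N_2(x,n)/n$ and pass to the limit using the existence of $\nu_1(x)$ and $\nu_2(x)$. Your extra remarks, that only $\nu_1$ and $\nu_2$ are needed and that the converse fails, are accurate but not required for the statement.
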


\begin{proof}
Since
$$
  \displaystyle\frac{1}{n}\sum^{n}_{i=1}\alpha_i(x)=
    \displaystyle\frac{0\cdot N_0(x,n)}{n}+\displaystyle\frac{1\cdot N_1(x,n)}{n}+\displaystyle\frac{2\cdot N_2(x,n)}{n} =\displaystyle\frac{N_1(x,n)}{n}+2\displaystyle\frac{N_2(x,n)}{n},
$$
then
$$
\lim\limits_{n\to\infty}\frac{1}{n}\sum^{n}_{i=1}\alpha_i(x)=\lim\limits_{n\to\infty}\left(\frac{N_1(x,n)}{n}+2\frac{N_2(x,n)}{n}\right)= \lim\limits_{n\to\infty}\frac{N_1(x,n)}{n}+2\lim\limits_{n\to\infty}\frac{N_2(x,n)}{n},
$$
provided that these limits exist.
From the last equality, we have $$r(x)=\lim\limits_{n\to\infty}\displaystyle\frac{1}{n}\sum^{n}_{i=1}\alpha_i(x)= \nu_1(x)+2\nu_2(x).$$
\end{proof}

\begin{lemma}\label{lema5.1.2}
If the ternary representation of a number does not have the frequency of one digit, the frequency of at least one other digit does not exist either.
\end{lemma}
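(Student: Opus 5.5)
We argue by contraposition, using the identity $N_0(x,n)+N_1(x,n)+N_2(x,n)=n$, valid for every $x\in[0,1]$ and every $n\in\mathbb N$ because each of the first $n$ ternary digits of $x$ is exactly one of $0,1,2$. Dividing by $n$ gives
$$
\frac{N_0(x,n)}{n}+\frac{N_1(x,n)}{n}+\frac{N_2(x,n)}{n}=1 \qquad\text{for all } n .
$$
The statement of Lemma~\ref{lema5.1.2} is equivalent to this: if the frequencies of two of the three ternary digits exist, then so does the frequency of the third. We prove it in that form.

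Let $i,j,k$ be the digits $0,1,2$ in some order, and suppose $\nu_i(x)$ and $\nu_j(x)$ exist. By the identity above,
$$
\frac{N_k(x,n)}{n}=1-\frac{N_i(x,n)}{n}-\frac{N_j(x,n)}{n}.
$$
The right-hand side converges as $n\to\infty$ by linearity of limits, so $\nu_k(x)$ exists and equals $1-\nu_i(x)-\nu_j(x)$.

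Now let $x$ be a number whose ternary representation lacks the frequency of some digit $k$. If the frequencies of both remaining digits existed, the preceding paragraph would force $\nu_k(x)$ to exist, a contradiction. Hence the frequency of at least one other digit fails to exist as well.

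No step here is a real obstacle. The only point to check is that the counting identity holds for $s$-adic rational numbers too, and it does under the convention adopted in the introduction: each number is assigned a single representation, so $N_i(x,n)$ is well defined for all $x$.
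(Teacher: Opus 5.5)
Your proof is correct and is essentially the paper's argument. Both rest on the identity $\frac{N_k(x,n)}{n}=1-\frac{N_i(x,n)}{n}-\frac{N_j(x,n)}{n}$: the paper argues directly that $\lim_{n\to\infty}\bigl(\frac{N_i(x,n)}{n}+\frac{N_j(x,n)}{n}\bigr)$ fails to exist, so the two frequencies cannot both exist, while you state the same observation as a contrapositive using linearity of limits.
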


\begin{proof}
Suppose that the frequency $\nu_k(x_0)$ does not exist, that is the limit
$\lim\limits_{n\to\infty}\frac{N_k(x_0,n)}{n}$ does not exist. Since $\frac{N_k(x_0,n)}{n}=1-\frac{N_j(x_0,n)}{n}-\frac{N_m(x_0,n)}{n}$ then the limit $\lim\limits_{n\to\infty}\left(\frac{N_j(x_0,n)}{n}+\frac{N_m(x_0,n)}{n}\right) $ does not exist. This means that the limits $\lim\limits_{n\to\infty}\frac{N_{j}(x_0,n)}{n}$ and $\lim\limits_{n\to\infty}\frac{N_{m}(x_0,n)}{n}$ either do not exist simultaneously, or the limit $\lim\limits_{n\to\infty}\frac{N_j(x_0,n)}{n}$ exists while the limit $\lim\limits_{n\to\infty}\frac{N_m(x_0,n)}{n}$ does not exist and $j,k,m=0,1,2$, $j\neq k\neq m$.
\end{proof}

\begin{lemma}\label{lema5.1.2}
If the frequency $\nu_i(x)$ exists for the ternary representation of a number $x$, then $\nu_i(x)=\nu_{2-i}(1-x)$, where $i\in\{0,1,2\}$.
\end{lemma}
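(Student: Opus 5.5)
The plan is to relate the ternary digits of $x$ and of $1-x$ digit by digit, and then read off the counting functions. Suppose first that $x=\Delta^3_{\alpha_1\alpha_2\ldots}$ is ternary irrational. Since $\Delta^3_{(2)}=1$, we get
$$
1-x=\sum_{k\ge1}\frac{2-\alpha_k}{3^k}=\Delta^3_{[2-\alpha_1][2-\alpha_2]\ldots}.
$$
This representation of $1-x$ does not end in a period $(0)$ or $(2)$, because the one for $x$ does not. So $1-x$ is ternary irrational, this is its unique representation, and $\alpha_k(1-x)=2-\alpha_k(x)$ for every $k$.

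Under this digit involution, position $j$ carries digit $i$ for $x$ exactly when it carries digit $2-i$ for $1-x$. Hence $N_i(x,n)=N_{2-i}(1-x,n)$ for every $n$. Dividing by $n$ and letting $n\to\infty$, the existence of $\nu_i(x)$ gives the existence of $\nu_{2-i}(1-x)$ and the equality $\nu_i(x)=\nu_{2-i}(1-x)$.

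The main obstacle is the ternary rational case, where the paper's convention forces the representation with period $(0)$. If $x=\Delta^3_{c_1\ldots c_k(0)}$ with $c_k\neq0$, the digitwise complement $\Delta^3_{[2-c_1]\ldots[2-c_k](2)}$ ends in $(2)$. It is therefore not the admissible representation of $1-x$. The admissible one is $\Delta^3_{[2-c_1]\ldots[2-c_{k-1}][3-c_k](0)}$, which differs from the complement in at most finitely many positions (in fact from position $k$ on, with all later digits changed). I would handle this case directly instead.

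The tail of $x$ is all zeros, so $\nu_0(x)=1$ and $\nu_1(x)=\nu_2(x)=0$. The admissible representation of $1-x$ also ends in $(0)$, so $\nu_0(1-x)=1$ and $\nu_1(1-x)=\nu_2(1-x)=0$. Consequently, for $i=1$ both sides of the claimed identity are $0$, but for $i\in\{0,2\}$ the literal identity fails. For example, $\nu_0(x)=1$ while $\nu_2(1-x)=0$.

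For ternary rational $x$ the statement therefore holds only if $1-x$ is read through its $(2)$-periodic representation, that is, the digitwise complement. I would state this as a remark: the lemma holds for all ternary irrational $x$ (and for $x=0$ or $x=1$ read symmetrically), and for ternary rational $x$ it holds once $1-x$ is taken with its $(2)$-periodic representation, where the identity is immediate from $N_i(x,n)=N_{2-i}(1-x,n)$. Since the rational points form a countable set, this exception does not affect any later metric or fractal statements.
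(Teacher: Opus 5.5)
This is the same digitwise-complement argument as the paper's, and your caveat is correct: the paper asserts $\alpha_k(1-x)=2-\alpha_k(x)$ for every $x$, which fails for ternary-rational $x\in(0,1)$ under its own period-$(0)$ convention. For example, $x=1/3=\Delta^3_{1(0)}$ and $1-x=\Delta^3_{2(0)}$ give $\nu_0(x)=1\neq 0=\nu_2(1-x)$, so the lemma as stated really does need the restriction you give.

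One correction: delete the phrase ``at most finitely many positions.'' The admissible representation of $1-x$ differs from the complement in every position from $k$ on, as your parenthetical says, and that is exactly why the frequencies change.
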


\begin{proof}
If $x=\Delta^3_{\alpha_1\alpha_2\ldots\alpha_k\ldots}$ then $1-x=\Delta^3_{(2-\alpha_1)(2-\alpha_2)\ldots(2-\alpha_k)\ldots}$, hence $\alpha_k(1-x)=2-\alpha_k(x)$. Thus $N_i(x,k)=N_{2-i}(1-x,k)$, and therefore we have $\nu_i(x)=\nu_{2-i}(1-x).$
\end{proof}

\begin{theorem}\label{teorema5.1.1}
If the ternary representation of a number $x$ is periodic with period $(s_1 \ldots s_m)$, then it has frequencies for all digits, and moreover
$$
\nu_i(x)=\displaystyle\frac{m_i}{m},
$$
where $m_i$ denotes the quantity of digits $i$ in the sequence $(s_1 \ldots s_m)$.
\end{theorem}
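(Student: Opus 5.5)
The plan is to count digits directly. I treat the possibly mixed-periodic case, since it costs nothing extra. Let the ternary representation of $x$ (the one fixed by our convention, i.e. not ending in the period $(2)$) be
$$
x=\Delta^3_{c_1\ldots c_p(s_1\ldots s_m)},
$$
with a pre-period of length $p\geqslant 0$ followed by the repeating block $(s_1\ldots s_m)$. The case $p=0$ is the purely periodic statement. One remark belongs at the start: if the period happened to be $(2)$, the convention would replace this representation by the one with period $(0)$. The theorem concerns the representation actually used to define $\alpha_n(x)$, so we assume the period is the one appearing in that representation.

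For each $n>p$, divide $n-p$ by $m$ with remainder:
$$
n=p+qm+r,\qquad 0\leqslant r<m,
$$
where $q=q(n)\to\infty$ as $n\to\infty$. The first $n$ digits then consist of three pieces: the pre-period, $q$ full copies of the block $s_1\ldots s_m$, and an initial segment of length $r$ of one more copy. For a fixed digit $i\in\{0,1,2\}$ this gives
$$
N_i(x,n)=a_i+q\,m_i+b_i(n),
$$
where $a_i$ is the number of digits $i$ among $c_1,\ldots,c_p$. Hence $0\leqslant a_i\leqslant p$ is independent of $n$, and $0\leqslant b_i(n)\leqslant r<m$.

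Now divide by $n$. Since $qm\leqslant n\leqslant qm+p+m$, we have
$$
\frac{qm}{qm+p+m}\leqslant\frac{qm}{n}\leqslant 1,
$$
so $q/n\to 1/m$. The terms $a_i/n$ and $b_i(n)/n$ are bounded in absolute value by $(p+m)/n$, so they tend to $0$. Therefore
$$
\frac{N_i(x,n)}{n}=\frac{a_i}{n}+m_i\,\frac{q}{n}+\frac{b_i(n)}{n}\longrightarrow\frac{m_i}{m}
$$
for each $i=0,1,2$. This shows that all three frequencies exist and $\nu_i(x)=m_i/m$. As a consistency check, $m_0+m_1+m_2=m$, so the frequencies sum to $1$.

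There is no real obstacle here. The only points needing care are the bookkeeping of the incomplete final block, which the bound $b_i(n)<m$ handles, and the convention on ternary-rational numbers. I would address the latter explicitly in a sentence rather than in the computation.
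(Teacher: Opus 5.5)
Your proof is correct and takes essentially the same approach as the paper: count the complete copies of the period among the first $n$ digits, and show that the incomplete final block (and any pre-period) contributes nothing after dividing by $n$. The paper first discards the pre-period, using the fact that frequencies do not depend on finitely many initial digits, and then sandwiches $N_i(x_0,k)/k$ between its values at consecutive multiples $nm \le k < (n+1)m$; you instead keep the pre-period and write $N_i(x,n)=a_i+qm_i+b_i(n)$ with bounded error terms, and your explicit remark about the convention for ternary-rational numbers is a point the paper leaves unaddressed.
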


\begin{proof}
Since the frequency of a digit does not depend on any finite set of first digits, it suffices to carry out the proof for the number  $x_0=\Delta^3_{(s_1\ldots s_m)}$.

Let $b_n=n\cdot m$. Then
$$
N_0(x_0,b_n)=n\cdot m_0,\,\,N_1(x_0,b_n)=n\cdot m_1,\,\,N_2(x_0,b_n)=n\cdot m_2,
$$
and
$$
\displaystyle\frac{N_i(x_0,b_n)}{b_n}=\displaystyle\frac{n\cdot m_i}{n\cdot m}=\displaystyle\frac{m_i}{m},\,\,\,i=0,1,2.
$$

Let $k$ be an arbitrary natural number with $k > m$, then there exists an $n = n(k)$ such that
$$
b_n\leqslant k<b_{n+1}.
$$ Hence
$$
N_i(x_0,b_n)\leqslant N_i(x_0,k)\leqslant N_i(x_0,b_{n+1}),
$$ and
$$
\displaystyle\frac{n\cdot m_0}{(n+1)\cdot m}
=\displaystyle\frac{N_i(x_0,b_n)}{b_{n+1}}<\displaystyle\frac{N_i(x_0,k)}{k}\leqslant\displaystyle\frac{N_i(x_0,b_{n+1})}{b_n}
=\displaystyle\frac{(n+1)\cdot m_0}{n\cdot m}.
$$

Since $$\lim_{n\to\infty}\displaystyle\frac{n+1}{n}=\lim_{n\to\infty}\displaystyle\frac{n}{n+1}=1,$$ Then we have
$$\nu_i(x)=\lim\limits_{k\to\infty}\displaystyle\frac{N_i(x_0,k)}{k} =\displaystyle\frac{m_i}{m}.$$
\end{proof}

\begin{corollary}\label{naslidok5.1.1}
Each rational number in the interval $[0,1]$ has frequencies for all ternary digits.
\end{corollary}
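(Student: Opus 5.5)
The plan is to reduce the corollary to Theorem~\ref{teorema5.1.1} by using the classical fact that a real number is rational exactly when its base-$3$ expansion is eventually periodic. The one direction we need is: every rational $x=p/q\in[0,1]$ has a ternary representation of the form $\Delta^3_{c_1\ldots c_k(s_1\ldots s_m)}$, a finite pre-period followed by a period.

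To establish this, I would generate the digits by long division. Put $r_0=p$ with $0\le p\le q$. Define recursively $\alpha_n=\lfloor 3r_{n-1}/q\rfloor$ and $r_n=3r_{n-1}-\alpha_n q$. Two cases need care.
\begin{itemize}
\item If $x=1$, the digits $\alpha_n$ all equal $0$ except possibly the first; more simply, the representation of $1$ is $\Delta^3_{(2)}$, or under the paper's convention one just treats it directly.
\item If $x<1$, then $0\le r_n<q$ for every $n$, and each $\alpha_n$ lies in $\{0,1,2\}$.
\end{itemize}
A short induction gives $x-\sum_{j\le n}\alpha_j3^{-j}=r_n/(3^nq)\to 0$, so these $\alpha_n$ are ternary digits of $x$.

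The remainders $r_n$ take only $q$ possible values, so by pigeonhole $r_k=r_{k+m}$ for some $k\ge 0$ and $m\ge 1$. Since the recursion is deterministic, the remainder sequence, and hence the digit sequence, is periodic from position $k+1$ on with period length $m$.

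It remains to check that this expansion agrees with the convention of using the representation with period $(0)$ for $3$-adic rationals. The long-division algorithm never produces a tail of $2$'s: if $\alpha_n=2$ for all $n>k$, then $r_n/(3^nq)=\sum_{j>n}2\cdot3^{-j}=3^{-n}$, which forces $r_n=q$, a contradiction. So the expansion obtained is exactly the one fixed by the paper's agreement. In any case both representations of a $3$-adic rational are eventually periodic, with periods $(0)$ and $(2)$, so the conclusion would hold for either one.

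Finally, apply Theorem~\ref{teorema5.1.1}. Its proof already notes that frequencies do not depend on a finite initial block of digits, so the pre-period $c_1\ldots c_k$ can be discarded. This yields the frequencies $\nu_i(x)=m_i/m$ for $i=0,1,2$, where $m_i$ is the number of occurrences of $i$ in the period. The only step requiring any care is the pigeonhole/remainder argument for eventual periodicity together with the convention check; the rest follows directly from Theorem~\ref{teorema5.1.1}.
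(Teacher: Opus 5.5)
Your proposal is correct and follows the paper's route. The paper states the corollary without proof as an immediate consequence of Theorem~\ref{teorema5.1.1}, whose proof notes that a finite pre-period does not affect frequencies, combined with the eventual periodicity of ternary expansions of rationals; you additionally verify that periodicity by long division and check it against the paper's convention. The only blemish is your $x=1$ bullet: long division there outputs the non-digit $\alpha_1=3$, so just note that $1=\Delta^3_{(2)}$ is its unique representation and is purely periodic.
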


\begin{corollary}\label{naslidok5.1.2}
A number $x$ whose ternary representation is periodic with period $(s_1 \ldots s_m)$ always has an asymptotic mean of digits.
\end{corollary}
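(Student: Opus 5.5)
The plan is to obtain this corollary by combining Theorem~\ref{teorema5.1.1} with the second lemma of this section, the one giving $r(x)=\nu_1(x)+2\nu_2(x)$. Since the ternary representation of $x$ is periodic with period $(s_1\ldots s_m)$, Theorem~\ref{teorema5.1.1} says that all three frequencies $\nu_0(x),\nu_1(x),\nu_2(x)$ exist and equal $m_0/m$, $m_1/m$, $m_2/m$ respectively. The second lemma then yields that the asymptotic mean of digits $r(x)$ exists. Moreover it equals
$$
r(x)=\nu_1(x)+2\nu_2(x)=\frac{m_1+2m_2}{m}=\frac{s_1+s_2+\ldots+s_m}{m},
$$
which is the arithmetic mean of the digits in one period.

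I would also note explicitly that the statement covers representations that are only eventually periodic, i.e.\ a pre-period followed by the period $(s_1\ldots s_m)$. This is harmless for two reasons. First, Theorem~\ref{teorema5.1.1} was itself reduced to the purely periodic case by the remark that finitely many initial digits do not affect frequencies. Second, finitely many initial digits likewise change $\sum_{i\le n}\alpha_i(x)$ by a bounded amount, so they do not change the limit of $\frac1n\sum_{i\le n}\alpha_i(x)$. The $3$-adic rational numbers, with period $(0)$ under the adopted convention, fall under this case as well, giving $r(x)=0$.

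There is essentially no obstacle here. The only point needing care is that the second lemma was phrased as ``provided that these limits exist''. This is exactly the hypothesis supplied by Theorem~\ref{teorema5.1.1}, so the limit of the sum splits into the sum of the limits. As an independent check, one could argue directly, repeating the sandwich argument from the proof of Theorem~\ref{teorema5.1.1} with $b_n=nm$. Then $\sum_{i\le b_n}\alpha_i=n(s_1+\ldots+s_m)$, and for $b_n\le k<b_{n+1}$ the partial sum lies between the values at $b_n$ and $b_{n+1}$, which squeezes $\frac1k\sum_{i\le k}\alpha_i$ to $\frac{s_1+\ldots+s_m}{m}$.
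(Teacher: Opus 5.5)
Your proposal is correct and follows the paper's route: the paper states this corollary without a separate proof right after Theorem~\ref{teorema5.1.1}, and it follows exactly by feeding the existence of all three frequencies from that theorem into the lemma giving $r(x)=\nu_1(x)+2\nu_2(x)$. Your value $r(x)=(s_1+\ldots+s_m)/m$, your remark on pre-periods, and your direct sandwich check are all sound.
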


\section{Frequency function of a ternary digit}

\begin{theorem}(Properties of the digit frequency function).\label{teorema5.1.2} 
\begin{enumerate}
  \item The digit frequency function $\nu_i(x)$, $i \in \mathcal{A}$, is bounded and attains all values in the interval $[0,1]$.
  \item The set of points where the digit frequency function is undefined is dense in $[0,1]$.
  \item The digit frequency function $\nu_i(x)$, $i \in \mathcal{A}$, is discontinuous at every point in $[0,1]$.
\end{enumerate}
\end{theorem}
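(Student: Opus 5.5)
We treat the three parts separately; all of them use one fact: the frequency $\nu_i$ does not depend on any finite initial block of digits. This is the fact already used in the proof of Theorem~\ref{teorema5.1.1}. It holds because changing finitely many digits changes $N_i(x,k)$ by a bounded amount, which disappears after division by $k$.

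\emph{Part 1.} Boundedness is immediate from $0\leqslant N_i(x,k)\leqslant k$, so $0\leqslant\nu_i(x)\leqslant1$ wherever the frequency exists. To show that every $\tau\in[0,1]$ is attained, we fix a digit $j\neq i$ and build a sequence of digits from $\{i,j\}$. We set $\alpha_k=i$ exactly when $\lfloor k\tau\rfloor>\lfloor (k-1)\tau\rfloor$, and $\alpha_k=j$ otherwise. Then $N_i(x,k)=\lfloor k\tau\rfloor$, so $N_i(x,k)/k\to\tau$.

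We must make sure the sequence does not end in $(2)$, since by the convention such an expansion is not the one used. We therefore choose $j=0$ when $i=2$, and otherwise choose $j\in\{0,1\}$. For $i=2$ and $\tau=1$ we cannot avoid the tail $(2)$ this way. Instead we take a sequence of $2$'s with $0$'s inserted at the positions $n^2$; this has frequency of $2$ equal to $1$ and is not eventually $(2)$. Rational values of $\tau$ can alternatively be handled by Theorem~\ref{teorema5.1.1}.

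\emph{Part 2.} Take any cylinder interval $\Delta^3_{c_1\ldots c_m}$. We append a tail on which $\nu_i$ does not exist, built from blocks of $i$'s and blocks of some $j\neq i$ of rapidly increasing lengths, for example lengths $10^n$. Along the ends of these blocks the ratio $N_i/k$ oscillates between values close to $0$ and close to $1$, so the limit does not exist. By the finite-prefix fact the first $m$ digits do not affect this. Every subinterval of $[0,1]$ contains such a cylinder, so the set of points where $\nu_i$ is undefined is dense. We again choose $j$ so that the number is $3$-adic irrational, which avoids any ambiguity of representation.

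\emph{Part 3.} Fix $x_0$ and a neighbourhood $U$ of $x_0$. We show that $U$ contains points $y$ and $z$ with $\nu_i(y)=0$ and $\nu_i(z)=1$ (or with $z$ replaced by a point where $\nu_i$ is undefined). Then $\nu_i$ cannot be continuous at $x_0$: if $\nu_i(x_0)$ is undefined, the function is not even defined there, and otherwise the oscillation of $\nu_i$ in every neighbourhood of $x_0$ is at least $1/2$.

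To find $y$ and $z$, we take $m$ so large that the cylinder $\Delta^3_{\alpha_1(x_0)\ldots\alpha_m(x_0)}$ is contained in $U$. This is possible since the cylinder contains $x_0$ and has length $3^{-m}$. We then attach to its prefix the Part~1 tails for $\tau=0$ and $\tau=1$. The cylinder lies in $U$, and the frequencies are unchanged by the finite-prefix fact.

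\emph{Main obstacle.} The main difficulty is technical bookkeeping around the convention for $3$-adic rational numbers. First, the constructed tails must never become $(2)$. Second, when $x_0$ is $3$-adic rational, the cylinder built from its chosen representation contains $x_0$ only as its left endpoint. This does no harm, because the cylinder still lies in any small neighbourhood of $x_0$ and the argument only needs points near $x_0$ on one side.
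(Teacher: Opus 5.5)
Your proof is correct. Parts 1 and 2 follow the paper's approach. In Part 1 the paper uses the same $[n\cdot a]$ construction, but only for irrational $a$; it handles $a\in\{0,1\}$ and rational $a$ separately through periodic expansions and Theorem~\ref{teorema5.1.1}, whereas your rule covers all $\tau$ at once. In Part 2 the paper likewise attaches arbitrary finite prefixes to one oscillating tail, built from blocks of $i$'s and $j$'s of length $2^n$. Your special construction for $i=2$, $\tau=1$ is not needed: $1=\Delta^3_{(2)}$ has no competing expansion ending in $(0)$, so $\nu_2(1)=1$.

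The real difference is Part 3. The paper derives it from Part 2 in one line. Since $\nu_i$ is undefined on a dense set, it is not defined on any neighbourhood of any point, and the paper counts definedness on a neighbourhood as part of continuity. That is short but purely definitional, and it says nothing about $\nu_i$ on the set where it exists. Your argument shows that every neighbourhood of every point contains points with $\nu_i=0$ and points with $\nu_i=1$. So $\nu_i$ is nowhere continuous even as a function on its own domain, regardless of that convention. In effect you also prove the paper's next lemma, the non-existence of $\lim_{x\to x_0}\nu_i(x)$, which the paper obtains with the same cylinder trick using the tails $(012)$ and $(i)$. Your care with $(2)$-tails pays off there. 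For $i=2$, $x_0\neq1$ and large $n$, the paper's points $\Delta^3_{\alpha_1(x_0)\ldots\alpha_n(x_0)(2)}$ are read through their expansions ending in $(0)$, so they have $\nu_2=0$ rather than $1$. The lemma's conclusion still holds.
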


\begin{proof}
1. Since $k^{-1} N_i(x,k) \leq 1$, then the function $\nu_i(x)$ cannot take values greater than 1, and therefore it is bounded.

Let us specify a number $x \in [0,1]$ for which $\nu_i(x) = a \in [0,1]$.
If $a = 1$, then such a number is $x = \Delta^3_{(i)}$, where $i \in {0,1,2}$.
If $a = 0$, then $x = \Delta^3_{(j)}$, where $j \in {0,1,2}$ and $j \neq i$.

Now let $0 < a < 1$. If $a$ is a rational number, that is $a=\displaystyle\frac{l}{m},$ then the frequency of the $i$--th ternary digit of the number
 $x=\Delta^3_{(\underbrace{i\ldots i}_l\underbrace{j\ldots j}_{m-l})},$
is equal to $a$.

If $a$ is irrational, we define the integer sequence $(c_n) = [n \cdot a]$.
Since $0<a<1$ then $c_1=0$. Let us consider the difference \\
$d_n=c_{n+1}-c_n=[(n+1)\cdot a]-[n\cdot a]=[n\cdot a+a]-[n\cdot a]=[[n\cdot a]+\{n\cdot a\}+a]-[n\cdot a].$\\ Since $[[A]+[B]]=[A]+[B]$, then $$d_n=[n\cdot a]+[\{n\cdot a\}+a]-[n\cdot a]=[\{n\cdot a\}+a].$$ Taking into account that $\{n\cdot a\}\in[0;1)$ і $a\in(0;1)$ we have $d_n\in\{0,1\}$.

Let us consider the number $x_*$ such that $N_i(x_*,n)=c_n$. Clearly such a number exists. Indeed, one can take  $x_*=\Delta^3_{\alpha_1\alpha_2\ldots\alpha_n\ldots}$, where
$$
 \alpha_{n+1}=
               \begin{cases}
                  i, & \text{if $d_n=1$,} \\
                  j, & \text{if $d_n=0$,}
               \end{cases}
$$
and $j\in \{0,1,2\}\setminus \{i\}$.
Since $a-\displaystyle\frac{1}{n}=\displaystyle\frac{n\cdot a-1}{n}<\displaystyle\frac{N_i(x_*,n)}{n}=\displaystyle\frac{[n\cdot a]}{n}\leqslant\displaystyle\frac{n\cdot a}{n}=a$, then $$\nu_i(x_*)=\lim_{n\to\infty}\displaystyle\frac{N_i(x_*,n)}{n}=a.$$

2. There exist numbers that do not have the frequency of at least two ternary digits. For example, such a number is $$x^*=\Delta^3_{ijiijjiiiijjjj...\underbrace{i...i}_{2^n}\underbrace{j...j}_{2^n}...} $$
Indeed, let us consider the sequence $(k_n)$, where $k_n$ represents the position of the last occurrence of the digit $i$ in the $(n+1)$-th block of consecutive $i$’s. \\
$N_i(x,k_n)=1+2+2^2+\ldots+2^n=\displaystyle\frac{2^{n+1}-1}{2-1}=2^{n+1}-1,$\\
$N_j(x,k_n)=1+2+2^2+\ldots+2^{n-1}=\displaystyle\frac{2^n-1}{2-1}=2^n-1,$\\
$\begin{array}{ll}
   k_n & =(1+2+\ldots+2^n)+(1+2+\ldots+2^{n-1})=2(1+2+\ldots+2^{n-1})+2^n=\\
       & =2\cdot\displaystyle\frac{2^n-1}{2-1}+2^n=2^{n+1}+2^n-2;
\end{array}$
$$\displaystyle\lim_{n\to\infty}\frac{N_i(x,k_n)}{k_n}=\displaystyle\lim_{n\to\infty}\frac{2^{n+1}-1}{2^{n+1}+2^n-2}=\displaystyle\frac{2}{3},$$
$$\displaystyle\lim_{n\to\infty}\frac{N_j(x,k_n)}{k_n}=\displaystyle\lim_{n\to\infty}\frac{2^n-1}{2^{n+1}+2^n-2}=\displaystyle\frac{1}{3}.$$
Now let $k'_n$ denote the position of the last digit $j$ in the $(n+1)$-th block of consecutive $j$’s. Clearly, \\
$N_i(x,k'_n)=1+2+2^2+\ldots+2^n=\displaystyle\frac{2^{n+1}-1}{2-1}=2^{n+1}-1=N_j(x,k'_n),\\
 k'_n=2\cdot(1+2+2^2+\ldots+2^n)=2(2^{n+1}-1)=2^{n+2}-2;$\\
$$\displaystyle\lim_{n\to\infty}\frac{N_i(x,k'_n)}{k'_n}=\displaystyle\lim_{n\to\infty}\frac{N_j(x,k'_n)}{k'_n}=\displaystyle\lim_{n\to\infty}\frac{2^{n+1}-1}{2^{n+2}-2}=\displaystyle\frac{1}{2},\,\,\,i,j\in\{0,1,2\}.$$\\
Hence $$\displaystyle\lim_{n\to\infty}\frac{N_i(x,k_n)}{k_n}=\displaystyle\frac{2}{3}\neq\displaystyle\frac{1}{2}=\displaystyle\lim_{n\to\infty}\frac{N_i(x,k'_n)}{k'_n},$$ $$\displaystyle\lim_{n\to\infty}\frac{N_j(x,k_n)}{k_n}=\displaystyle\frac{1}{3}\neq\displaystyle\frac{1}{2}=\displaystyle\lim_{n\to\infty}\frac{N_j(x,k'_n)}{k'_n}.$$
This demonstrates that the number $x$ has neither the frequency of the digit $i$ nor the frequency of the digit $j$.

Since the frequency does not depend on any finite number of initial digits in a number’s ternary representation, then every number in the tail set $x^*$ lacks the frequency of both digits $i$ and $j$. Since this tail set is dense in $[0,1]$, then the digit frequency function $\nu_i(x)$ exhibits a discontinuity within every arbitrarily small interval.

3. If the frequency $\nu_i(x_0)$ does not exist, then the function $\nu_i$ is discontinuous at the point $x_0$. Since the definition of continuity at a point requires the function to be defined in some neighborhood of that point, then it follows from Proposition 2 that the function $\nu_i$ is discontinuous at every point in $[0,1]$.
\end{proof}

The following property of the function $\nu_i$ provides a refinement of its discontinuity.

\begin{lemma}
The limit $\lim\limits_{x \to x_0} \nu_i(x)$ does not exist, even when the frequency $\nu_i(x_0)$ exists.
\end{lemma}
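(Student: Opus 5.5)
The idea is to show that every neighbourhood of $x_0$ contains points at which $\nu_i$ takes any prescribed value in $[0,1]$. Any attempted limit is then violated along suitable sequences. The key fact, already used in the proof of Theorem~\ref{teorema5.1.2}, is that the frequency does not depend on any finite set of initial digits.

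Fix $x_0\in[0,1]$ and two distinct values, say $0$ and $1$; more generally, any $a\neq b$ in $[0,1]$ will do. For each $n$, let $\alpha_1(x_0)\ldots\alpha_n(x_0)$ be the first $n$ ternary digits of $x_0$, under the paper's convention. I will build two points:
$$
x_n'=\Delta^3_{\alpha_1(x_0)\ldots\alpha_n(x_0)(i)},\qquad
x_n''=\Delta^3_{\alpha_1(x_0)\ldots\alpha_n(x_0)(j)},\quad j\neq i,
$$
choosing $j\in\{0,1\}\setminus\{i\}$ so that $x_n''$ does not end in the period $(2)$ and its digits are read correctly. If $i=2$, then $x_n'$ ends in $(2)$ and is a ternary-rational number written in its non-admissible form. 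To avoid this, I will instead take the periodic tail $(2\,2\ldots 2\,0)$ with one $0$ in every $m$ digits. By Theorem~\ref{teorema5.1.1}, this gives frequency $(m-1)/m$, and letting $m\to\infty$ along with $n$ still produces values tending to $1$. Alternatively, the tail can be built with an irrational frequency $a$ as in part~1 of Theorem~\ref{teorema5.1.2}, which avoids the issue entirely. In any case, Theorem~\ref{teorema5.1.1} (or the construction in part~1) together with tail-independence gives $\nu_i(x_n'')=0$ and $\nu_i(x_n')=1$, or two distinct constants $a\neq b$.

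Both $x_n'$ and $x_n''$ lie in the cylinder of rank $n$ containing $x_0$, which has length $3^{-n}$. Hence $|x_n'-x_0|\le 3^{-n}$ and $|x_n''-x_0|\le 3^{-n}$. Both points must also differ from $x_0$. This holds for at least one of the two choices of tail, and in general I can perturb the tail; for instance, I can choose tails with irrational frequencies, which makes the point irrational and distinct from $x_0$. With this, $x_n'\to x_0$ and $x_n''\to x_0$, while $\nu_i(x_n')\to b$ and $\nu_i(x_n'')\to a$ with $a\neq b$. By the Heine (sequential) definition of the limit, $\lim_{x\to x_0}\nu_i(x)$ does not exist.

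The value $\nu_i(x_0)$ plays no role in this argument, so the conclusion holds whether or not the frequency exists at $x_0$. I expect the main obstacle to be the care needed around the representation convention: ensuring the constructed points have the intended digit expansions (avoiding tails $(2)$) and are distinct from $x_0$. Using tails with two different irrational frequencies $a\neq b$, as in part~1 of Theorem~\ref{teorema5.1.2}, handles both issues at once.
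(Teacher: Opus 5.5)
Your argument is correct and is essentially the paper's: truncate $x_0$ after $n$ ternary digits, append two tails with different frequencies of the digit $i$ (the paper uses $(012)$ and $(i)$), and apply the Heine criterion; the paper's final step is exactly your uniform argument, while its first two cases only show $\lim\nu_i(x_n)\neq\nu_i(x_0)$. Your extra care about the inadmissible tail $(2)$ when $i=2$ and about $x_n\neq x_0$ fixes points the paper ignores, but note that irrationality of $x_n$ alone does not make it differ from an irrational $x_0$. The cleanest fix is to choose both frequencies $a\neq b$ different from $\nu_i(x_0)$ (when it exists), so that $\nu_i(x_n)\neq\nu_i(x_0)$ forces $x_n\neq x_0$.
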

\begin{proof}
We provide a constructive proof of this fact.

If $\nu_i(x_0)\neq\frac{1}{3}$ then let us consider the sequence $(x_n)$: 
$$x_n=\Delta^3_{\alpha_1(x_0)...\alpha_n(x_0)(012)}.$$ 
According to Theorem 1 we have $\nu_i(x_0)=\frac{1}{3}$. It is clear that $x_n\to x_0$ as $n\to\infty$ but $$\lim_{x_n\to x_0}\nu_i(x_n)=\frac{1}{3}\neq\nu_i(x_0).$$

If $\nu_i(x_0)=\frac{1}{3}$ then let us consider the sequence
$$x'_n=\Delta^3_{\alpha_1(x_0)...\alpha_n(x_0)(i)},$$
hence we have
\begin{eqnarray}
& \lim\limits_{n\to\infty}\nu_i(x'_n)=1=\lim\limits_{x_n\to x_0}\nu_i(x'_n)\neq\nu_i(x_0). \label{af241}\nonumber&
\end{eqnarray}

Now let us consider the case when the frequency $\nu_i(x)$ in the representation of the number $x$ does not exist. Since $x_n\to x_0$ as $n\to\infty$ and $x'_n\to x_0$  as $n\to\infty$ but also
$$\frac{1}{3}=\lim\limits_{n\to\infty}\nu_i(x_n)\neq \lim\limits_{n\to\infty}\nu_i(x'_n)=1,$$
therefore, the limit of the frequency function $\lim\limits_{x \to x_0} \nu_i(x)$ does not exist.
\end{proof}

\section{The set of numbers with a prescribed asymptotic mean of digits}
We consider the set $S_a=\left\{x:r(x)=a\geqslant 0\right\}$ 
of numbers with a prescribed asymptotic mean of digits equal to $a$, where $0 \leq a \leq 2$.

\begin{lemma}
If the asymptotic mean of digits $r(x)$ and the frequency $\nu_0(x)$ exist, then the frequencies $\nu_1(x)$ and $\nu_2(x)$ also exist. Conversely, if the frequency $\nu_0(x)$ does not exist but $r(x)$ exists, then the frequencies $\nu_1(x)$ and $\nu_2(x)$ do not exist either.
\end{lemma}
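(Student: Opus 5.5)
The plan is to reduce both assertions to two linear identities between the relative frequencies. For every $n$ we have
$$
\frac{N_0(x,n)}{n}+\frac{N_1(x,n)}{n}+\frac{N_2(x,n)}{n}=1,\qquad \frac{N_1(x,n)}{n}+2\frac{N_2(x,n)}{n}=\frac{1}{n}\sum_{i=1}^{n}\alpha_i(x).
$$
This is the same decomposition used in the proof of Lemma 2. Write $p_n=N_0(x,n)/n$, $q_n=N_1(x,n)/n$, $t_n=N_2(x,n)/n$ and $m_n=\frac1n\sum_{i\le n}\alpha_i(x)$. Solving the linear system for $q_n,t_n$ in terms of $p_n,m_n$ gives
$$
t_n = m_n-1+p_n,\qquad q_n = 2-2p_n-m_n .
$$
Both formulas are checked directly: their sum is $1-p_n$, and $q_n+2t_n=m_n$.

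For the first assertion, suppose $m_n\to r(x)$ and $p_n\to\nu_0(x)$. By the displayed expressions, $t_n$ and $q_n$ are linear combinations of convergent sequences, so both converge. This gives $\nu_2(x)=r(x)-1+\nu_0(x)$ and $\nu_1(x)=2-2\nu_0(x)-r(x)$.

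For the converse, suppose $m_n$ converges but $p_n$ does not. Since $p_n=t_n+1-m_n$, convergence of $t_n$ would force convergence of $p_n$, so $\nu_2(x)$ does not exist. Similarly $p_n=(2-m_n-q_n)/2$, so convergence of $q_n$ would force convergence of $p_n$; hence $\nu_1(x)$ does not exist either.

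There is no real obstacle here; the only point needing care is inverting the $2\times2$ linear relation correctly, which the algebra above settles. Alternatively, one can argue by contradiction, as in Lemma 3: if one of $\nu_1,\nu_2$ existed together with $r$, the other would exist, and then $\nu_0=1-\nu_1-\nu_2$ would exist.
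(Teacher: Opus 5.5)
Your proposal is correct and follows essentially the same route as the paper: the same two linear identities, solved as $t_n=m_n-1+p_n$ and $q_n=2-2p_n-m_n$, with limits taken. For the converse, you explicitly invert the relations ($p_n=t_n+1-m_n$ and $p_n=(2-m_n-q_n)/2$), which justifies a step the paper only asserts.
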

\begin{proof}
Let $v^{(n)}_j=n^{-1}N_j(x,n)$ denote the relative frequency of the digit $j$ in the ternary representation of a number $x$, and $r_n(x)=\frac{1}{n}\sum\limits^{n}_{j=1}\alpha_j(x)$ denote the relative mean of the digits of $x$. Then the following system of equations holds:
$$
\begin{cases}
v^{(n)}_0+v^{(n)}_1+v^{(n)}_2=1,\\
v^{(n)}_1+2v^{(n)}_2=r_n.\\
\end{cases}
$$
We rewrite it in the form
$$
  \begin{cases}
    v^{(n)}_2=r_n-1+v^{(n)}_0,\\
    v^{(n)}_1=2-2v^{(n)}_0-r_n.\\
  \end{cases}
$$

If the limits $\lim\limits_{n\to\infty}r_n$ and $\lim\limits_{n\to\infty}v^{(n)}_0$ exist, then the second equation of the system implies the existence of $\lim\limits_{n\to\infty}v^{(n)}_1$ and the first equation implies the existence of $\lim\limits_{n\to\infty}v^{(n)}_2$.

If the limit $\lim\limits_{n\to\infty}r_n$ exist and the limit $\lim\limits_{n\to\infty}v^{(n)}_0$ does not exist then the limits $\lim\limits_{n\to\infty}v^{(n)}_1$ and $\lim\limits_{n\to\infty}v^{(n)}_2$ do not exist, therefore the frequencies $\nu_1(x)$ and $\nu_2(x)$ do not exist.
\end{proof}

The set $S_a$ is the union of two disjoint sets:
$K_a=\left\{x:\nu_1(x)\,\,\,\text{and}\,\,\,\nu_2(x)\,\,\,\text{exist}\right\}$, $M_a=\left\{x:\nu_0(x)\,\,\,\text{does not exist}\right\}$.

Let us analyze the properties of the subset $K_a$ of the set $S_a$.

\begin{theorem}
The set $K_a$ is continuum, everywhere dense set in the interval $[0;1]$. It has zero Lebesgue measure when $a \neq 1$ and full Lebesgue measure when $a = 1$. Its Hausdorff--Besicovitch dimension is at least equal to the number
$$
-\displaystyle\frac{1}{\ln 3}\ln\Biggl[ \left(\displaystyle\frac{t-1}{3}\right)^\frac{t-1}{3} \left(\displaystyle\frac{7-3a-t}{6}\right)^\frac{7-3a-t}{6} \left(\displaystyle\frac{1+3a-t}{6}\right)^\frac{1+3a-t}{6}\Biggr],
$$
where $t=\sqrt{1+6a-3a^2}.$
\end{theorem}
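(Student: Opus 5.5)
The plan is to reduce everything to Besicovitch--Eggleston sets. By Lemma 2, every $x$ whose ternary frequencies all exist satisfies $r(x)=\nu_1(x)+2\nu_2(x)$. Hence
$$
K_a=\bigcup\bigl\{E[\tau_0,\tau_1,\tau_2]:\ \tau_i\geqslant 0,\ \tau_0+\tau_1+\tau_2=1,\ \tau_1+2\tau_2=a\bigr\}.
$$
In particular $K_a\supset E[\tau_0,\tau_1,\tau_2]$ for each admissible triple. The dimension bound then follows from monotonicity of the Hausdorff--Besicovitch dimension, applied to the single set $E$ whose dimension $\alpha_0(E)=-\ln(\tau_0^{\tau_0}\tau_1^{\tau_1}\tau_2^{\tau_2})/\ln 3$ is largest under these two linear constraints.

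\textbf{Step 1: the optimal triple.} Parametrize the admissible triples by $\tau_1$: we have $\tau_2=(a-\tau_1)/2$ and $\tau_0=1-\tau_1-\tau_2=(2-a-\tau_1)/2$. Maximize the entropy $H=-\sum\tau_i\ln\tau_i$ over this segment. The Lagrange (or one-variable) condition is $\tau_1^2=\tau_0\tau_2$, that is,
$$
4\tau_1^2=(2-a-\tau_1)(a-\tau_1), \qquad\text{equivalently}\qquad 3\tau_1^2+2\tau_1-a(2-a)=0 .
$$
Its nonnegative root is $\tau_1=(t-1)/3$ with $t=\sqrt{1+6a-3a^2}$. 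This gives
$$
\tau_0=\frac{7-3a-t}{6},\qquad \tau_2=\frac{1+3a-t}{6},
$$
and one checks that these are nonnegative for $0\leqslant a\leqslant 2$ and that $\tau_1+2\tau_2=a$. Substituting them into the Besicovitch--Eggleston formula from the introduction gives exactly the stated number. Strictly, only the inclusion $E\subset K_a$ is needed for the bound; the optimization just explains the choice of triple.

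\textbf{Step 2: density and cardinality.} Existence of the frequencies and the values of $\nu_i$ do not depend on finitely many initial digits. So for any cylinder $\Delta^3_{c_1\ldots c_k}$ and any $x\in E$, the number with digits $c_1\ldots c_k$ followed by the tail of $x$ lies in $K_a$. Hence $K_a$ meets every cylinder and is everywhere dense.

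For the continuum property, take one sequence $(\alpha_n)$ realizing the chosen frequencies; it exists by the construction in Theorem 2(1), adapted to three digits. Then modify it freely, choosing between two digits, only at positions $n=m^2$. This set of positions has zero density, so none of the frequencies change, and the map from binary choice sequences to the resulting numbers is injective. This yields $2^{\aleph_0}$ points even in the degenerate cases $a=0,2$, where $t=1$ and the dimension bound is $0$.

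\textbf{Step 3: Lebesgue measure.} By Borel's theorem, almost every $x$ is normal, with $\nu_0=\nu_1=\nu_2=1/3$, and therefore $r(x)=1$. For $a=1$ the set $K_1$ contains the normal numbers, so it has full measure. For $a\neq 1$ the set $K_a$ is disjoint from the normal numbers, so it has measure zero.

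The main technical point is the algebra in Step 1: solving the quadratic correctly, matching the three expressions to $\tau_0,\tau_1,\tau_2$, and checking nonnegativity for $a\in[0,2]$. Everything else is a direct appeal to Lemma 2, the Besicovitch--Eggleston formula and Borel's theorem.
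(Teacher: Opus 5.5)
Your proposal is correct and follows essentially the same route as the paper: $K_a$ contains the Besicovitch--Eggleston sets with $\tau_1+2\tau_2=a$, and the entropy maximization over that segment is the same one-variable calculation (you parametrize by $\tau_1$, the paper by $\nu_2$, and both reach the same critical triple); density, the continuum property and the Lebesgue measure are handled the same way, by prefixing cylinder digits, by modifying a fixed sequence at a sparse set of positions, and by Borel's theorem. Your remark that only monotonicity is needed is apt: the paper's appeal to countable stability does not apply to the uncountable union over admissible triples, but the stated lower bound does not need it.
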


\begin{proof}
1. \emph{Continuum property.}
Since, for any point $x$ in the set $K_a$ the equality $r(x)=\nu_1(x)+2\nu_2(x)$ holds, the set $K_a$ contains all Besicovitch--Eggleston sets $E\equiv E[\tau_0,\tau_1,\tau_2]$ for which $$\tau_1+2\tau_2=a.$$ Therefore, the continuum property of $K_a$ follows from the continuum property of the sets $E$.

We provide a direct proof of this fact.
Let us define an injective mapping $f$ from the interval $[0;1]$ into the set $E$, that is, a mapping for which $x_1 \neq x_2$ implies $f(x_1) \neq f(x_2)$.

If $\tau_i = 1$, $i \in {0,1,2}$ then $x_0 = \Delta^3_{(i)} \in E$. In this case the mapping can be taken as
$$
f(x)=f(\Delta^3_{\alpha_1\alpha_2\ldots\alpha_n\ldots})=\Delta^3_{\beta_1\beta_2\ldots\beta_n\ldots},
$$
where $\beta_{3^n}=\alpha_n$, $n=1,2,\ldots$, and $\beta_j=i$ when $j\not\in \{3^n\}.$

If $\tau_i\in[0;1)$ then the number $x_0=\Delta^3_{c_1c_2\ldots c_n\ldots}$ is ternary--irrational, that is, it contains an infinite number of blocks of zeros, ones, and twos, and thus has the form
$$x_0=\Delta^3_{\underbrace{0\ldots0}_{a_1}\underbrace{1\ldots1}_{a_2}\underbrace{2\ldots2}_{a_3}\ldots\underbrace{0\ldots0}_{a_{3k-2}}\underbrace{1\ldots1}_{a_{3k-1}}\underbrace{2\ldots2}_{a_{3k}}\ldots},$$ where $a_i\in N_0.$
To an arbitrary $x=\Delta^3_{d_1d_2\ldots d_n\ldots}$ from the interval $[0;1]$ we assign $\widehat{x}=f(x)=x_0*x,$ where
$$\widehat{x}=\Delta^3_{\underbrace{0\ldots0}_{a_1}\underbrace{1\ldots1}_{a_2}\underbrace{2\ldots2}_{a_3} \alpha_1 \underbrace{0\ldots0}_{a_4}\underbrace{1\ldots1}_{a_5}\underbrace{2\ldots2}_{a_6}\underbrace{0\ldots0}_{a_7}\underbrace{1\ldots1}_{a_8}\underbrace{2\ldots2}_{a_9}\alpha_2\underbrace{0\ldots0}_{a_{10}}\ldots}$$\\
that is, $f(x)=\Delta^3_{\beta_1\beta_2\ldots\beta_n\ldots},$ where $$\beta_{s_{3^n}+n}(\widehat{x})=d_n,\,\,n=1,2,\ldots ,\,\,\beta_j(\widehat{x})=c_k(x_0)\,\,\text{when}\,\,j\not\in(s_{3^n}+n).$$

Let us show that $\widehat{x}\in E$, namely: $\nu_0(\widehat{x})=\nu_0(x_0),\,\,\nu_1(\widehat{x})=\nu_1(x_0)$. To do this, it suffices to show that this holds even in the case when $x=\Delta^3_{(02)}.$

We introduce the following notation: let $a_{3k-2}$ denote the length of the $k$-th block of zeros, $a_{3k-1}$ the length of the $k$-th block of ones, and $a_{3k}$ the length of the $k$-th block of twos in the number $x_0$.
\[
  \begin{array}{ll}
    u_k&=a_1+a_4+\ldots +a_{3k-2}, \\
    v_k&=a_2+a_5+\ldots +a_{3k-1}, \\
    w_k&=a_3+a_6+\ldots +a_{3k}.
  \end{array}
\]

Then the sum of the lengths of the first $k$ blocks of ones in the number $\widehat{x}$ equals $v_k$, while the sums of the lengths of the first $k$ blocks of zeros and twos are, respectively equals to
\begin{eqnarray}
& u'_k=a_1+(a_4+1)+a_7+a_{10}+\ldots +(a_{3^{2i-1}+1}+1)+\ldots +a_{3k-2}=u_k+N, \label{af241}\nonumber&\\
& w'_k=a_3+a_6+(a_9+1)+\ldots +(a_{3^{2i}+1}+1)+\ldots +a_{3k}=w_k+N.\nonumber&
\end{eqnarray}

Let us calculate the values of the frequencies of the digits zero and two for the numbers $x_0$ and $\widehat{x}$.\\
$\begin{array}{ll}
\nu_0(x_0)& =\lim\limits_{k\to\infty}\displaystyle\frac{u_k}{u_k+v_k+w_k}=\lim\limits_{k\to\infty}\displaystyle\frac{1}{1+\frac{v_k}{u_k}+\frac{w_k}{u_k}},
\end{array}$\\
$\begin{array}{ll}
\nu_2(x_0)& =\lim\limits_{k\to\infty}\displaystyle\frac{w_k}{u_k+v_k+w_k}=\lim\limits_{k\to\infty}\displaystyle\frac{1}{1+\frac{u_k}{w_k}+\frac{v_k}{w_k}},
\end{array}$\\
$\begin{array}{ll}
\nu_0(\widehat{x})& =\lim\limits_{k\to\infty}\displaystyle\frac{u'_k}{u'_k+v'_k+w'_k}= \lim\limits_{k\to\infty}\displaystyle\frac{u_k+N}{u_k+v_k+w_k+2N}=\\
                   &=\lim\limits_{k\to\infty}\displaystyle\frac{1}{1+\frac{v_k}{u_k}+\frac{w_k}{u_k}+\frac{2N(k)}{u_k}}+\lim\limits_{k\to\infty}\displaystyle\frac{1}{\frac{u_k}{N}+\frac{v_k}{N}+\frac{w_k}{N}+2},\\
\nu_2(\widehat{x})& =\lim\limits_{k\to\infty}\displaystyle\frac{w'_k}{u'_k+v'_k+w'_k}= \lim\limits_{k\to\infty}\displaystyle\frac{w_k+N}{u_k+v_k+w_k+2N}=\\
                   &=\lim\limits_{k\to\infty}\displaystyle\frac{1}{1+\frac{u_k}{w_k}+\frac{v_k}{w_k}+\frac{2N}{w_k}}+\lim\limits_{k\to\infty}\displaystyle\frac{1}{\frac{u_k}{N}+\frac{v_k}{N}+\frac{w_k}{N}+2}.\\
\end{array}$\\
Since\\
$\lim\limits_{k\to\infty}\displaystyle\frac{u_k}{N(k)}\geqslant\lim\limits_{k\to\infty}\displaystyle\frac{3^{2k-1}+1}{k}= \lim\limits_{x\to\infty}\displaystyle\frac{3^{2x-1}+1}{x}=\lim\limits_{x\to\infty}\displaystyle\frac{2\cdot 3^{2x-1}\ln3}{1}=\infty$, and\\
$\lim\limits_{k\to\infty}\displaystyle\frac{w_k}{N(k)}\geqslant\lim\limits_{k\to\infty}\displaystyle\frac{3^{2k}}{k}= \lim\limits_{x\to\infty}\displaystyle\frac{3^{2x}}{x}=\lim\limits_{x\to\infty}\displaystyle\frac{2\cdot 3^{2x}\ln3}{1}=\infty$, then\\
$\nu_0(\widehat{x})=\lim\limits_{k\to\infty}\displaystyle\frac{1}{1+\frac{v_k}{u_k}+\frac{w_k}{u_k}}$ and $\nu_2(\widehat{x})=\lim\limits_{k\to\infty}\displaystyle\frac{1}{1+\frac{u_k}{w_k}+\frac{v_k}{w_k}}$, that is,
$$
\nu_0(\widehat{x})=\nu_0(x_0),\,\,\nu_1(\widehat{x})=\nu_1(x_0).
$$
Therefore, the Besicovitch--Eggleston set $E[\tau_0, \tau_1, \tau_2]$ has the cardinality of the continuum.
Since $E[\tau_0, \tau_1, \tau_2] \subseteq K_a$ then the set $K_a$ is a continuum set.

2. \emph{Everywhere density.} The set $K_a$ is everywhere dense set since in any cylinder $\Delta^3_{c_1\ldots c_k}=\{x:\alpha_j(x)=c_j,\:j=\overline{1,m}\}$ there exist points from this set. Specifically, if $x_0= \Delta^3_{\alpha_1\alpha_2\ldots\alpha_n\ldots}\in K_a$, then the point $x=\Delta^3_{c_1\ldots c_m\alpha_1\ldots\alpha_n\ldots}\in K_a \cap \Delta^3_{c_1\ldots c_m}$.

3. \emph{Lebesgue measure.} If $a=1$ then the set $K_a$ contains the set $H$ of normal on the base 3 numbers, that is, the set of numbers whose digit frequencies are equal:
$$\nu_0(x)=\nu_1(x)=\nu_2(x)=\displaystyle\frac{1}{3}.$$ Then
$$\nu_1(x)+2\nu_2(x)=\displaystyle\frac{1}{3}+2\cdot\displaystyle\frac{1}{3}=1.$$
Since $\lambda(H)=1$ then $\lambda(K_a)=1$.

If $a\neq1$ then $K_a\cap H=\varnothing$. Hence, $K_a\subset[0;1]\setminus H$,
$\lambda(K_a)=\lambda([0;1]\setminus H)=0$.

4. \emph{The Hausdorff–Besicovitch dimension} possesses the property of monotonicity.
$$
  E_1\subset E_2 \Rightarrow \alpha_0(E_1)\leqslant\alpha_0(E_2)
$$
and countable stability.
$$
  \alpha_0(\bigcup_n E_n)=\sup\limits_n\alpha_0(E_n).
$$
Thus,  $$\alpha_0(K_a)=\sup\limits_{\substack{
\nu_1+2\nu_2=a, \\
\nu_1+2\nu_2+\nu_3=1
}}\alpha_0(E(\nu_0,\nu_1,\nu_2)).$$

From the conditions $\nu_0(x)+\nu_1(x)+\nu_2(x)=1$ and $\nu_1(x)+2\nu_2(x)=a$ we determine
$\nu_1=a-2\nu_2$ and $\nu_0=1-\nu_1-\nu_2=1-a+2\nu_2-\nu_2=1-a+\nu_2.$
To compute the Hausdorff--Besicovitch dimension, we analyze the function to find its maximum.
$$
\begin{array}{ll}
y&=-\displaystyle\frac{\ln  x^{x}\cdot(a-2x)^{a-2x}\cdot(1-a+x)^{1-a+x}}{\ln 3},
\end{array}
$$ where $x\in[0;\frac{a}{2}]$, that is,\\
$
\begin{array}{ll}
y&=\displaystyle\frac{\ln x^{x}+\ln (a-2x)^{a-2x}+\ln (1-a+x)^{1-a+x}}{-\ln 3},
\end{array}
$\\
$
\begin{array}{ll}
y'&=-\displaystyle\frac{1}{\ln3}\left(\ln x+1-2\ln(a-2x)-2\displaystyle\frac{a-2x}{a-2x}+\ln(1-a+x)+\displaystyle\frac{1-a+x}{1-a+x}\right)=\\
&=-\displaystyle\frac{1}{\ln 3}\left(\ln x+1-2\ln(a-2x)-2+\ln(1-a+x)+1\right)=\\ &=-\displaystyle\frac{1}{\ln 3}\left(\ln\displaystyle\frac{(1-a+x)x}{(a-2x)^2}\right),
\end{array}
$\\
$
\begin{array}{ll}
y'& =0,\:\:\:\displaystyle\frac{(1-a+x)x}{(a-2x)^2}=1,
\end{array}
$\\
\[
\begin{cases}
(1-a+x)x-(a-2x)^2=0,\\
x\neq\frac{a}{2};\\
\end{cases}
\]
$3x^2-(1+3a)x+a^2=0,\\
x_{1}=\displaystyle\frac{1+3a-\sqrt{1+6a-3a^2}}{6}\,\,\text{ is point of maximum},\\
x_{2}=\displaystyle\frac{1+3a+\sqrt{1+6a-3a^2}}{6}\,\,\text{ is point of minimum},\\$
$$y_{max}=-\displaystyle\frac{1}{\ln 3}\cdot\ln\Biggl[ \left(\displaystyle\frac{t-1}{3}\right)^\frac{t-1}{3}\cdot \left(\displaystyle\frac{7-3a-t}{6}\right)^\frac{7-3a-t}{6}\cdot \left(\displaystyle\frac{1+3a-t}{6}\right)^\frac{1+3a-t}{6}\Biggr],$$ where $t=\sqrt{1+6a-3a^2}.$
\end{proof}

\begin{corollary}
The Hausdorff–Besicovitch dimension of the set $K_1=\left\{x:r(x)=1\right\}$
is equal to 1, that is, $\alpha_0(K_1)=1.$
\end{corollary}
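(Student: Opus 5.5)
The corollary concerns the set written $K_1=\{x:r(x)=1\}$, i.e. $S_1$. I would prove $\alpha_0=1$ by squeezing: the upper bound $\alpha_0\leqslant 1$ is automatic, since the set lies in $[0;1]$ and Hausdorff--Besicovitch dimension is monotone, with $\alpha_0([0;1])=1$. So only the lower bound $\alpha_0\geqslant 1$ needs an argument, and I see two routes, both resting on the preceding theorem.

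\emph{Route 1 (measure).} By part 3 of the preceding theorem, $K_1$ contains the set $H$ of numbers normal on base $3$, and $\lambda(H)=1$. Hence $K_1$ has positive Lebesgue measure. A subset of $\mathbb R$ with positive Lebesgue measure has $1$-dimensional Hausdorff measure (a constant multiple of $\lambda$) equal to $\lambda(K_1)>0$, so its Hausdorff dimension is at least $1$. This is the most direct route.

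\emph{Route 2 (explicit formula).} Alternatively, substitute $a=1$ into the dimension bound of the preceding theorem.
\begin{itemize}
\item Then $t=\sqrt{1+6-3}=2$.
\item The three bases become $\frac{t-1}{3}=\frac13$, $\frac{7-3-2}{6}=\frac13$ and $\frac{1+3-2}{6}=\frac13$.
\item The bracket is therefore $\left(\frac13\right)^{1/3\cdot 3}=\frac13$, and the bound equals $-\frac{\ln(1/3)}{\ln 3}=1$.
\end{itemize}
This matches the Besicovitch--Eggleston value for $E[\frac13,\frac13,\frac13]$, which is $H$ itself.

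In either route, combining with the trivial upper bound gives $\alpha_0(K_1)=1$. There is no real obstacle. The only point needing care is to confirm that the critical point $x_1=\frac{1+3a-t}{6}=\frac13$ lies in the admissible interval $[0;\frac a2]=[0;\frac12]$. It does, so the formula legitimately applies at $a=1$.
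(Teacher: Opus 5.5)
Your proposal is correct and matches how the paper means the corollary to follow from the preceding theorem (the paper gives no separate proof). The upper bound is trivial from $K_1\subset[0;1]$, and the lower bound comes either from $H\subset K_1$ with $\lambda(H)=1$, or from evaluating the dimension bound at $a=1$, where $t=2$ and all three frequencies equal $\frac13$, so that the relevant set is just $E[\frac13,\frac13,\frac13]=H$.
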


\end{document}